\newcommand{\B}[1]{{\mathbf #1}}
\newcommand{\til}[1]{{\widetilde{#1}}}
\newtheorem{thm}{Theorem}[section]
\newtheorem{thm*}{Theorem}
\newtheorem{lem}[thm]{Lemma}
\newtheorem{prop}[thm]{Proposition}
\newtheorem{cor}[thm]{Corollary}
\newtheorem*{q*}{Question}
\theoremstyle{definition}
\newtheorem*{rem*}{Remark}
\newtheorem*{rems*}{Remarks}
\newtheorem*{cor*}{Corollary}
\def\B{\mathbf}
\newcommand\Diff{\operatorname{Diff}}
\newcommand\Ham{\operatorname{Ham}}
\newcommand\Aut{\operatorname{Aut}}
\newcommand\OP{\operatorname}
\def\A{\mathcal{A}}
\def\MA{\mathcal{A}_{\rm{Morse}}}
\def\Diff{\operatorname{Diff}}
\def\G{\mathcal{G}}
\def\Id{\operatorname{Id}}
\def\Im{\operatorname{Im}}
\def\X{\operatorname{X}}
\def\a{\alpha}
\def\b{\beta}
\def\g{\gamma}
\def\o{\omega}
\def\s{\sigma}
\begin{document}

\title[$L^p$-geometry of autonomous Hamiltonian diffeomorphisms]
{\textbf{On the $L^p$-geometry of autonomous Hamiltonian diffeomorphisms of surfaces.}}

\author[Brandenbursky]{Michael Brandenbursky}
\address{CRM, University of Montreal, Canada}
\email{michael.brandenbursky@mcgill.ca}
\author[Shelukhin]{Egor Shelukhin}
\address{CRM, University of Montreal, Canada \textit{and} Hebrew University of Jerusalem}
\email{egorshel@gmail.com}

\keywords{groups of Hamiltonian diffeomorphisms, braid groups, mapping class groups, quasi-morphisms, $L^p$-metrics}
\subjclass[2010]{Primary 53; Secondary 57}

\date{\today}
\maketitle

\begin{abstract}
We prove a number of results on the interrelation between the $L^p$-metric
on the group of Hamiltonian diffeomorphisms of surfaces and the subset $\A$
of autonomous Hamiltonian diffeomorphisms. More precisely, we show that
there are Hamiltonian diffeomorphisms of all surfaces of genus $g \geq 2$ or $g=0$
lying arbitrarily $L^p$-far from the subset $\A,$ answering a variant
of a question of Polterovich for the $L^p$-metric.
\end{abstract}

%\tableofcontents

\section{Introduction and main results}

%General survey of L^p metrics

In contrast to the case of finite-dimensional Lie groups, the subset of
elements $\A$ of the group $\G$ of Hamiltonian diffeomorphisms lying
on a one-parameter subgroup, called autonomous Hamiltonian diffeomorphisms,
is very thin from several points of view. For example it is folklore in
symplectic geometry that, analogously to the case of general
diffeomorphism groups, $\A$ does not contain a neighborhood of the identity transformation
in, say, the $C^\infty$-topology.
%Our methods extend to show this fact for surfaces, and we include a proof as a matter of general interest. {\color{red} Include a proof}

L. Polterovich has proposed to study the inclusion $\A \subset \G$ from a metric point of view.
We study the geometry of this inclusion when $\G$ is endowed with the $L^p$-metric,
given by taking $L^p$-norms of Hamiltonian vector fields with respect to an auxiliary
Riemannian metric and volume form. The main result of this paper, applicable for surfaces of genus $g=0$ and 
exponent $p>2$, or $g \geq 2$ and exponent $p\geq 1$, is that with respect
to the $L^p$-metric, $\A$ is not coarsely dense: for every given number $C > 0$,
there exists an element $\phi \in \G$ whose $L^p$-distance to any element of $\A$ is greater than $C$.
Analogous results for the Hofer metric \cite{HoferMetric,LalondeMcDuffEnergy} on $\G$, given by taking the $C^0$-norms of normalized Hamiltonian functions, have recently been obtained for e.g. surfaces of genus $g \geq 2$ by Polterovich and the second named author \cite{PolterovichShelukhin}. Note that for exponents $1 \leq p \leq 2$, neither result follows from the other.

We further refine our main result showing that for every positive integer $k$,
the same conclusion holds when $\A$ is replaced by the subset $\A^k \subset \G$,
the image of $\A \times ... \times \A$ ($k$ times) in $\G$ under the multiplication map.
We remark that $\{\A^k\}_{k \in \B N}$ form an increasing sequence of subsets
of $\G$ whose union is the whole group $\G$, i.e. $\A$ is a generating subset for $\G$.

Our methods involve quasimorphisms on the group of Hamiltonian diffeomorphisms $\G$ of a surface that were introduced and first studied in \cite{GambaudoGhysCommutators} (and were further investigated in numerous other publications
\cite{BrandenburskyLpMetrics, BrandenburskyKedra1, BrandenburskyShelukhin, PolterovichDynamicsGroups, PySurfacesQm}).
The class of quasimorphisms on $\G$ that we use is produced by an averaging procedure from quasimorphisms on the fundamental groups of spaces upon which $\G$ acts. The spaces are configuration spaces $\OP{X}_n(\B\Sigma_g)$
of distinct ordered $n$-tuples of points on the orientable surface $\B\Sigma_g$.

These quasimorphisms were shown to be Lipschitz in the $L^p$-metric in many cases
\cite{BrandenburskyLpMetrics, BrandenburskyShelukhin} and are conjectured to have
this property for all $n$ and all surfaces $\B\Sigma_g$, the higher genus case
seeming more technically involved. In particular in the case of $\B T^2$ no such quasimorphism has been shown to be Lipschitz in the $L^p$-metric which shall be the subject of a subsequent work. Sometimes the above quasimorphisms have the additional property of vanishing on the subset $\A$ of autonomous Hamiltonian diffeomorphisms. Whenever both properties are satisfied, our result holds.

We therefore continue to show that for each surface $\B\Sigma_g$
other than $\B T^2$ there exists an integer $n > 0$ and an infinite-dimensional subspace
of homogeneous quasimorphisms on $\G$ transgressed from quasimorphisms on $\OP{X}_n(\B\Sigma_g)$
with the required two properties. The vanishing property follows from an analysis of the braids
traced by the  flow of an autonomous Morse Hamiltonian in the genus $g=0$ case, and by mapping
class group considerations in the higher genus case, the passage from Morse Hamiltonians to
general Hamiltonians being enabled by the Lipschitz property.

Whenever the vanishing property holds, we can show that the word metric with respect to
the generating set $\A$ has infinite diameter, which we hence prove along the way for
all genera other that $g=1$, thus reproving results from
\cite{Brandenbursky-surfaces, BrandenburskyKedra1, GambaudoGhysCommutators} in detail.

\subsection*{Acknowledgements}
We thank Leonid Polterovich for valuable conversations and for his comments on the manuscript.

Part of this work has been done during the first named author's stay in Mathematisches For\-schung\-sinstitut Oberwolfach and in Max Planck Institute for Mathematics in Bonn. He expresses his gratitude to both institutes. M.B. was supported by the Oberwolfach Leibniz fellowship and Max Planck Institute research grant.

Part of this work has been done during the second named author's stay in the Einstein Institute of Mathematics at the Hebrew University of Jerusalem. He expresses his gratitude to the institute and to Jake Solomon for his hospitality. E.S. was partially supported by ERC Starting Grant 337560.

Both authors were partially supported by the CRM-ISM fellowship. We thank CRM-ISM Montreal for the support and for a great research atmosphere.

\subsection{Preliminaries}

\subsubsection{Autonomous Hamiltonian diffeomorphisms}

Let $\B \Sigma_{g,k}$ be a compact connected orientable surface of genus
$g$ with $k$ boundary components equipped with a symplectic form $\o$,
and as usual we denote by $\G=\Ham(\B \Sigma_{g,k})$
the group of Hamiltonian diffeomorphisms (cf. \cite{BanyagaStructure,PolterovichBookGeometry}) of $\B \Sigma_{g,k}$. Let
$H\colon\B \Sigma_{g,k}\to\B R$
be a smooth function which vanishes in some neighborhood of $\partial\B\Sigma_{g,k}$.
It defines a time-independent vector field $X_H$ which is uniquely determined by the equation
$dH(v)=\o(v, X_H)$ for each vector field $v$. Let $h$ be the time-one map of the
flow $\{h_t\}$ generated by $X_H$. The diffeomorphism $h$ preserves $\o$ and every
diffeomorphism arising in this way is called {\em autonomous}. Since $X_H$ has
the property of being tangent to the level sets of $H$, each diffeomorphism $h_t$
preserves the level sets of~$H$.

We define the {\em autonomous norm} on the group $\G$ by
$$
\|f\|_{\OP{Aut}}:=
\min\left\{m\in \B N\,|\,f=h_1\cdots h_m
\text{ where each } h_i \text{ is autonomous}\right\}.
$$
The associated metric is defined by
$$
{\bf d}_{\OP{Aut}}(f,h):=\|fh^{-1}\|_{\OP{Aut}}.
$$
Since the set of autonomous diffeomorphisms is invariant under conjugation the
autonomous metric is bi-invariant. For the same reason the subgroup generated
by the autonomous diffeomorphisms is normal (and non-empty). Hence by a fundamental theorem of Banyaga \cite{BanyagaStructure} stating that
$\G$ is simple for $k=0$ and the kernel of the Calabi homomorphism is simple for $k\neq 0$, it is easy to see that the set of autonomous diffeomorphisms generates $\G$. Therefore the autonomous norm of any element $f\in\G$ is well-defined.
We note that since the autonomous norm is subadditive, its {\em stabilization}
$$\|f\|_{\OP{st}}: = \lim_{k \to \infty}\frac{1}{k}\|f^k\|_{\OP{Aut}}$$
is well-defined.

\subsubsection{The $L^p$-metric}
Let $\B M$ denote a compact connected oriented Riemannian manifold (possibly with boundary) with a volume form $\mu$.
We denote by $\G=\Diff_{c,0}(\B M,\mu)$ the identity component of the group of diffeomorphisms of $\B M$
preserving $\mu$, that are identity near the boundary if $\partial\B M\neq 0$.
Alternatively, in the case of non-empty boundary, one can consider the open manifold
$\B M\setminus\partial\B M$ and take compactly supported diffeomorphisms preserving $\mu$.

Given a path $\{f_t\}$ in $\G$ between $f_0$ and $f_1$, we define its $L^p$-length by
\[l_p(\{f_t\}) = \int_0^1 dt \, (\int_{\B M} |X_t|^p \mu)^{\frac{1}{p}},\]
where $X_t = \frac{d}{dt'}|_{t'=t} f_{t'} \circ f_t^{-1}$ is the time-dependent
vector field generating the path $\{f_t\}$, and $|X_t|$ its length with respect
to the Riemannian structure on $\B M$. As is easily seen by a displacement argument,
this length functional determines a non-degenerate metric on $\G$ by the formula
\[\B d_p(f_0,f_1) = \inf \; l_p(\{f_t \}),\]
where the infimum runs over all paths $\{f_t\}$ in
$\G$ between $f_0$ and $f_1$. It is immediate that
this metric is right-invariant. We denote the
corresponding norm on the group by
\[||f||_p = \B d_p(\Id,f).\]
Clearly $\B d_p(f_0,f_1) = ||f_1f_0^{-1}||_p$. Similarly one has the $L^p$-norm on the universal
cover $\til{\G}$ of $\G$, defined for $\til{f} \in \til{\G}$ as
\[||\til{f}||_p = \inf \; l_p(\{f_t\}),\]
where the infimum is taken over all paths $\{f_t\}$ in the class of $\til{f}$.
For more information see \cite{ArnoldKhesin}.

We note that up to bi-Lipschitz equivalence of metrics ($d$ and $d'$ are equivalent
if $\frac{1}{C}d \leq d' \leq C d$ for a certain constant $C>0$) the $L^p$-metrics
on $\G$ and on $\til{\G}$ are independent of the choice Riemannian structure and of
the volume form $\mu$ compatible with the orientation on $M$. In particular questions on the large-scale geometry of  the $L^p$-metric enjoy the same invariance property.

\subsubsection{Quasimorphisms}
The notion of a quasimorphism will play a key role in our arguments.
Quasimorphisms are helpful tools for the study of non-abelian groups,
especially those that admit few or no homomorphisms to the reals.
A quasimorphism $\phi\colon \Gamma\to\B R$ on a group $\Gamma$ is a real-valued function that satisfies for any two $\gamma_1,\gamma_2 \in \Gamma$ the relation
\[\phi(\gamma_1\gamma_2) = \phi(\gamma_1) + \phi(\gamma_2) + b(\gamma_1,\gamma_2),\]
for a function $b\colon \Gamma\times \Gamma\to\B R$ that is uniformly bounded:
\[D_\phi: = \sup_{\Gamma\times \Gamma} |b| < \infty.\]

A quasimorphism $\overline{\phi}\colon\Gamma\to\B R$ is called \textit{homogeneous} if
$\overline{\phi}(f^k) = k \overline{\phi}(f)$ for all $f\in\Gamma$ and
$k\in\B Z$. To any quasimorphism $\phi\colon\Gamma\to\B R$ there corresponds a unique
homogeneous quasimorphism $\overline{\phi}$ that differs from $\phi$ by a bounded function:
\[\sup_{\Gamma} |\overline{\phi} - \phi| <\infty.\]
It is called the \textit{homogenization} of $\phi$ and satisfies
\[\overline{\phi}(f) = \lim_{k\to\infty}\frac{\phi(f^k)}{k}.\]
The key property of a homogeneous quasimorphism is that it restricts to an actual homomorphism on every abelian subgroup.
A homogeneous quasimorphism $\overline{\phi}\colon\Gamma\to\B R$ is called \textit{genuine} if it is not a homomorphism. We refer to \cite{CalegariScl} for more information about quasimorphisms.

\subsection{Main results}

Let $\G=\Ham(\B\Sigma_g)$ be a group of Hamiltonian diffeomorphisms of $\B\Sigma_g$.
The main technical result of this paper is the following

\begin{thm*}\label{T:quasimorphisms-two-prop}
Let $g=0$ and $p>2$, or $g>1$ and $p\geq 1$. Then there exists an infinite-dimensional space of
homogeneous quasimorphisms $\G\to\B R$ that are both Lipschitz in
the $L^p$-metric and vanish on all autonomous Hamiltonian diffeomorphisms.
\end{thm*}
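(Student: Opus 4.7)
The plan is to use a Gambaudo--Ghys transgression from surface braid groups to $\G$, and verify the two required properties separately. Fix an integer $n\geq 2$ (depending on $g$), a basepoint $z_0\in\OP{X}_n(\B\Sigma_g)$, and a measurable family of short paths from $z_0$ to almost every $z$. Given $f\in\G$ and a Hamiltonian isotopy $\{f_t\}$ from $\Id$ to $f$, the trajectory $t\mapsto f_t(z)$ closed up via these paths defines a braid $\gamma(f;z)\in\pi_1(\OP{X}_n(\B\Sigma_g)/S_n)$. For a homogeneous quasimorphism $\phi$ on this surface braid group, set
\[\Phi(f) \,=\, \int_{\OP{X}_n(\B\Sigma_g)}\phi(\gamma(f;z))\, d\mu^n(z),\]
and let $\overline\Phi$ denote its homogenization on $\G$; independence of the isotopy is ensured, for $g\neq 1$, by simple connectivity of $\G$.

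For the Lipschitz property in $L^p$, I would invoke the estimates of \cite{BrandenburskyLpMetrics, BrandenburskyShelukhin}, which in the specified ranges of $g$ and $p$ bound the word-length deviation of $\gamma(f;z)$ by the $L^p$-length of the isotopy, integrated over $z$. Combined with boundedness of the defect of $\phi$ this yields a Lipschitz bound $|\overline\Phi(f)-\overline\Phi(f')|\leq C\cdot\B d_p(f,f')$ with $C$ depending on $\phi$. The Sobolev threshold $p>2$ in the sphere case enters precisely here, to control the displacement of a typical configuration along a Hamiltonian isotopy.

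The crux is the vanishing on $\A$. For autonomous $h$ with Morse Hamiltonian $H$, each trajectory $h_t(z_i)$ stays on a level set of $H$, generically a smooth circle. When $g=0$, cutting $\B\Sigma_0$ along the singular level sets of $H$ expresses $\gamma(h;z)$, up to conjugation and bounded correction, as a product of mutually commuting rotation braids supported in disjoint discs or annuli determined by the Reeb graph of $H$; a homogeneous quasimorphism becomes a homomorphism on the generated abelian subgroup, and one chooses the $\phi$'s to annihilate all such rotation braids. When $g\geq 2$, one projects $\gamma(h;z)$ to the mapping class group of the $n$-punctured surface, observes that the resulting class is reducible or periodic (the singular level sets of $H$ providing the reduction curves), and uses Bestvina--Fujiwara quasimorphisms arising from pseudo-Anosov actions, which vanish on reducible/periodic classes. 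The passage from Morse to arbitrary autonomous Hamiltonians follows from $L^p$-density of Morse Hamiltonians combined with the Lipschitz property and the homogeneity identity $\overline\Phi(h^k)=k\overline\Phi(h)$.

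Finally, the infinite-dimensional subspace is produced via the Bestvina--Fujiwara machinery applied to $\pi_1(\OP{X}_n(\B\Sigma_g)/S_n)$, which is acylindrically hyperbolic for $n$ large; among its infinite-dimensional space of homogeneous quasimorphisms, one restricts to those pulled back from hyperbolic actions whose loxodromic elements avoid all braids traced by autonomous Morse flows, preserving infinite dimension. Injectivity of the transgression $\phi\mapsto\overline\Phi$ on this subspace can be verified by evaluating on Hamiltonian diffeomorphisms supported in small discs that realize prescribed basis braids. I expect the principal obstacle to lie in the vanishing step: both the genus-zero structural analysis of braids coming from Morse flows, and the matching of an infinite-dimensional family of braid-group quasimorphisms to the resulting constraints in the higher-genus case.
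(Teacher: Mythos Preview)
Your overall architecture matches the paper closely: Gambaudo--Ghys/Polterovich transgression, Lipschitz via \cite{BrandenburskyLpMetrics,BrandenburskyShelukhin}, vanishing on Morse autonomous diffeomorphisms via the structure of level-set braids, and passage to general $\A$ by $C^1$-density of Morse functions combined with the Lipschitz bound. For $g=0$ your sketch is essentially the paper's argument: the paper makes the ``commuting rotation braids'' precise by fixing the abelian subgroup $\B A_n(\B S^2)$ generated by the braids $\eta_{i,n}$, shows every Morse-flow braid is, up to bounded words, a product of conjugates of these, and uses quasimorphisms in $Q(\B B_n(\B S^2),\B A_n(\B S^2))$; injectivity is quoted from Ishida rather than argued by hand. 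One small correction: $\Ham(\B S^2)$ is not simply connected; $\pi_1(\G)=\B Z/2\B Z$ by Smale, and the paper uses that a homogeneous quasimorphism vanishes on torsion to descend from $\til\G$ to $\G$.

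For $g\geq 2$ there is a genuine gap in your plan. You fix $n\geq 2$ and invoke the Lipschitz estimates of \cite{BrandenburskyLpMetrics,BrandenburskyShelukhin}, but for higher-genus surfaces those estimates are only established for $n=1$ (the Polterovich construction on $\pi_1(\B\Sigma_g)$); the paper itself states that the $n\geq 2$ case for $g\geq 1$ is merely conjectural. Accordingly the paper works with $n=1$: for Morse $H$ and $x\in\mathrm{Reg}_H$, the point-push $[\gamma_x]\in\pi_1(\B\Sigma_g)\hookrightarrow\mathcal{MCG}_g^1$ is the multitwist $t_{c_{x,+}}t_{c_{x,-}}^{-1}$, hence conjugate in $\mathcal{MCG}_g^1$ to an element of a \emph{finite} set $S_g$ depending only on the topological type of the level curve. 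The quasimorphisms used are those on $\pi_1(\B\Sigma_g)$ that extend to $\mathcal{MCG}_g^1$ and vanish on $S_g$; Bestvina--Fujiwara applied to the irreducible normal subgroup $\pi_1(\B\Sigma_g)\lhd\mathcal{MCG}_g^1$ yields an infinite-dimensional such space. Your ``vanishing on reducibles'' idea is morally the same observation (point-pushes of simple curves are multitwists), but you must run it at $n=1$ and phrase it as restricting quasimorphisms from $\mathcal{MCG}_g^1$ to $\pi_1(\B\Sigma_g)$; otherwise the Lipschitz step is unsupported.
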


\begin{rem*}\label{R:BK-work}
Let $\B D^2$ denote an open unit disc in the Euclidean plane, i.e $\B D^2:=\B\Sigma_{0,1}$.
Then the above theorem follows from Theorem 2.3 in \cite{BrandenburskyKedra1} combined
with Theorem 2 in \cite{BrandenburskyLpMetrics}.  If $g\neq 1$ then the statement of
the above theorem holds in the case of $\Ham(\B\Sigma_{g,k})$ as well.
The proof of this fact follows immediately from the proof of Theorem \ref{T:quasimorphisms-two-prop}.
\end{rem*}

As a corollary we obtain the main result of this paper,
showing that $\A\subset (\G,\B d_p)$ is not coarsely dense.
For a diffeomorphism $f\in\G$ and a subset $\mathcal{S}\in\G$
we define the distance $\B d_p(\phi,\mathcal{S})$ from $f$ to $\mathcal{S}$ by
\[\B d_p(f,\mathcal{S}):= \inf_{h\in\mathcal{S}}\B d_p(f, h).\]

\begin{cor}\label{C:Aut-thin-infinity-1}
For every $K\geq 0$ there exists a Hamiltonian diffeomorphism $f'\in\G$
such that $\B d_p(f',\A)\geq K$.
\end{cor}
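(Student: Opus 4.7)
The plan is to derive the corollary directly from Theorem \ref{T:quasimorphisms-two-prop} by exploiting the unboundedness of a homogeneous quasimorphism on cyclic subgroups. Since the theorem supplies an infinite-dimensional space of homogeneous quasimorphisms $\G \to \B R$ that are Lipschitz in the $L^p$-metric and vanish on $\A$, I would first fix any nonzero element $\overline{\phi}$ of this space. Then there exists some $f \in \G$ with $\overline{\phi}(f) \neq 0$, and the plan is to show that suitable powers $f^k$ of $f$ give the desired diffeomorphisms.

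The core estimate is obtained as follows. Let $C_L > 0$ denote a Lipschitz constant for $\overline{\phi}$ with respect to $\B d_p$, and let $D = D_{\overline{\phi}}$ denote the defect. For any $h \in \A$ we have $\overline{\phi}(h) = 0$, so
\[
k\,|\overline{\phi}(f)| \;=\; |\overline{\phi}(f^k) - \overline{\phi}(h)| \;\leq\; C_L\, \B d_p(f^k, h) + D,
\]
where the last inequality uses that a homogeneous quasimorphism satisfies $|\overline{\phi}(x) - \overline{\phi}(y)| \leq C_L\, \B d_p(x,y) + D$ (combining the quasimorphism relation for $xy^{-1}$ with the Lipschitz bound on $\overline{\phi}(xy^{-1})$ and right-invariance of $\B d_p$). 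Taking the infimum over $h \in \A$ gives
\[
\B d_p(f^k, \A) \;\geq\; \frac{k\,|\overline{\phi}(f)| - D}{C_L}.
\]

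Given $K \geq 0$, the plan is then to choose $k \in \B N$ large enough that the right-hand side is at least $K$, and set $f' := f^k$. This immediately yields $\B d_p(f', \A) \geq K$, as desired.

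The only subtlety I anticipate is making the "Lipschitz up to the defect" step precise: a homogeneous quasimorphism is not literally Lipschitz as a real-valued function unless one absorbs the defect $D$, but since we only need to compare $\overline{\phi}(f^k)$ to values growing linearly in $k$, the additive constant $D$ is harmless. All other content of the corollary is a direct unpacking of the definitions, so the main obstacle was really the construction of $\overline{\phi}$, which is exactly the content of Theorem \ref{T:quasimorphisms-two-prop}.
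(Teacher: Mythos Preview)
Your proposal is correct and follows essentially the same approach as the paper's proof: fix a nonzero Lipschitz homogeneous quasimorphism vanishing on $\A$, use the quasimorphism relation together with right-invariance of $\B d_p$ to bound $|\overline{\phi}(f^k)-\overline{\phi}(h)|$ by $C_L\,\B d_p(f^k,h)+D_{\overline{\phi}}$, and let $k\to\infty$. The only cosmetic difference is that the paper first derives the inequality for $f$ and then substitutes $f^m$, whereas you work directly with $f^k$.
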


\begin{proof}
Let $\overline{\phi}$ be a non-vanishing homogeneous quasimorphism
provided by Theorem \ref{T:quasimorphisms-two-prop}.
Hence there exists $f\in\G$ such that $\overline{\phi}(f)\neq 0$.
Then for $h\in\A$ we have the following inequalities:
\[|\overline{\phi}(f)| - D_{\overline{\phi}} \leq |\overline{\phi}(f)-\overline{\phi}(h^{-1})+b(f,h^{-1})|=
|\overline{\phi}(fh^{-1})| \leq C_{\overline{\phi}}\cdot \B d_p(f,h),\]
where the rightmost inequality follows from the Lipschitz property, and the leftmost inequality follows from the vanishing condition, since $h^{-1}\in\A.$ We conclude that
\[|\overline{\phi}(f)| - D_{\overline{\phi}}\leq C_{\overline{\phi}}\cdot\B d_p(f,\A).\]
Therefore denoting $c:=|\overline{\phi}(f)| > 0$ and taking $f' =f^m,$ we observe that $\B d_p(f',\A)$ satisfies
\[c\cdot m - D_{\overline{\phi}}\leq C_{\overline{\phi}}\cdot\B d_p(f',\A),\]
and the proof follows.
\end{proof}

Slightly upgrading the proof, we have
\begin{cor}\label{C:Aut-thin-infinity-2}
For every $K\geq 0$ and $k \in \B N$ there exists a Hamiltonian diffeomorphism
$f'\in\G$ such that $\B d_p(f',\A^k) \geq K$.
\end{cor}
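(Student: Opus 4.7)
The plan is to reuse the argument of Corollary \ref{C:Aut-thin-infinity-1} with a single modification: instead of bounding $|\overline{\phi}(h)|$ by zero for $h \in \A$, I will bound it by a constant depending only on $k$ and the defect $D_{\overline{\phi}}$ for $h \in \A^k$. The key observation is that the defect of a quasimorphism telescopes under multiplication: if $h = h_1 \cdots h_k$ with $h_i \in \A$, then applying the quasimorphism relation $k-1$ times together with $\overline{\phi}(h_i) = 0$ yields $|\overline{\phi}(h)| \leq (k-1)\, D_{\overline{\phi}}$. I would state this as a preliminary inequality.

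Next I would note that $\A^k$ is closed under inversion, since the inverse of an autonomous diffeomorphism is autonomous (generated by $-H$), so the inverse of a $k$-fold product of autonomous maps is again a $k$-fold product (in reverse order). Fixing $f \in \G$ with $\overline{\phi}(f) \neq 0$ guaranteed by Theorem \ref{T:quasimorphisms-two-prop}, I would write $\overline{\phi}(f) = \overline{\phi}(fh^{-1}) + \overline{\phi}(h) + b(fh^{-1}, h)$ for any $h \in \A^k$, and combine the Lipschitz property with the preliminary bound to deduce
\[ |\overline{\phi}(f)| \leq C_{\overline{\phi}} \cdot \B d_p(f,h) + (k-1) D_{\overline{\phi}} + D_{\overline{\phi}} = C_{\overline{\phi}} \cdot \B d_p(f,h) + k\, D_{\overline{\phi}}. \]
Taking the infimum over $h \in \A^k$ gives $|\overline{\phi}(f)| - k\, D_{\overline{\phi}} \leq C_{\overline{\phi}} \cdot \B d_p(f, \A^k)$.

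Finally, substituting $f' = f^m$ and using homogeneity of $\overline{\phi}$, the left-hand side becomes $m \cdot |\overline{\phi}(f)| - k\, D_{\overline{\phi}}$, which tends to $+\infty$ as $m \to \infty$ while $k$ stays fixed. Hence for any prescribed $K \geq 0$, choosing $m$ sufficiently large (depending on $K$, $k$, $C_{\overline{\phi}}$, $D_{\overline{\phi}}$, $|\overline{\phi}(f)|$) produces $f' = f^m$ with $\B d_p(f', \A^k) \geq K$.

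I do not foresee a serious obstacle here: the argument is essentially the same as for $k = 1$, and the only new input is the elementary telescoping bound on the quasimorphism over a $k$-fold product of elements on which it vanishes. The proof does not require anything beyond what was already used for Corollary \ref{C:Aut-thin-infinity-1}, so the linear growth in $m$ of the lower bound on $|\overline{\phi}(f^m)|$ dominates the additive $k\, D_{\overline{\phi}}$ correction without difficulty.
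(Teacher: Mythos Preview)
Your proposal is correct and follows essentially the same approach as the paper: the paper also observes that $|\overline{\phi}(h)|\leq (k-1)D_{\overline{\phi}}$ for $h\in\A^k$, combines this with the Lipschitz property to obtain $|\overline{\phi}(f)|\cdot m - k\,D_{\overline{\phi}}\leq C_{\overline{\phi}}\cdot \B d_p(f^m,\A^k)$, and then takes $m$ large. Your remark that $\A^k$ is closed under inversion is true but not actually needed in the way you wrote the argument, since you bound $|\overline{\phi}(h)|$ rather than $|\overline{\phi}(h^{-1})|$.
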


Indeed for each $h = h_1 \circ\ldots\circ h_k \in\A^k$ we have
$|\overline{\phi}(h)| \leq (k-1)D_{\overline{\phi}}$.
Therefore $\B d_p(f^m, \A^k)$ satisfies
\[|\overline{\phi}(f)|\cdot m-k D_{\overline{\phi}}\leq C_{\overline{\phi}}\cdot\B d_p(f^m, \A^k).\]
By taking a sufficiently large $m$ and $f'=f^m$, we conclude the proof of the corollary.

\begin{rem*}
Note that every homogeneous quasimorphism provided by Theorem \ref{T:quasimorphisms-two-prop} is genuine. This fact together with Corollary \ref{C:Aut-thin-infinity-2} imply that the metric group $(\G,\B d_{\Aut})$ has an infinite diameter. Moreover, the same fact implies that $\G$ is stably unbounded with respect to the autonomous metric, i.e. there exists $g\in\G$ such that $\|g\|_{\OP{st}}>0$.
\end{rem*}

\section{Proof of the main technical result}
Our proofs of the genus zero case and of the case of hyperbolic surfaces are different.
Before we start proving the main result, let us recall two constructions, one due to
Gambaudo and Ghys \cite{GambaudoGhysCommutators} and the other due to Polterovich \cite{PolterovichDynamicsGroups}, of quasimorphisms on the group $\G$ of Hamiltonian diffeomorphisms of compact surfaces.

\subsection{Quasimorphism constructions}
Let $g=0$. In what follows we recall a construction due to
Gambaudo and Ghys \cite{GambaudoGhysCommutators}, cf. \cite{BrandenburskyShelukhin},
 of a homogeneous quasimorphism on the group $\G$ of Hamiltonian diffeomorphisms of the two-sphere $\B S^2$
which is produced from a quasimorphism on the spherical pure braid group $\B P_n(\B S^2)$.

\subsubsection{Gambaudo-Ghys construction}
Let $\{f_t\}\in\G$ be an isotopy from the identity to $f\in\G$
and let $w\in \B S^2$ be a basepoint. For each $x\in\B S^2$ let us
choose a short geodesic from $w$ to $x$ and denote it by $s_{wx}$.
For $y\in\B S^2$ we define a loop $\gamma_{y,w}\colon [0,1]\to\B S^2$
to be a concatenation of paths $s_{wy}$, $f_{3t-1}(y)$
(here $t\in [\frac{1}{3},\frac{2}{3}]$) and $s_{g(y)w}$.

Let $\X_n(\B S^2)$ be the configuration space of all ordered $n$-tuples
of pairwise distinct points in the sphere $\B S^2$. It's fundamental group
$\pi_1(\X_n(\B S^2))$ is identified with the spherical pure braid group $\B P_n(\B S^2)$.
Fix a basepoint $z=(z_1,\ldots,z_n)$ in $\X_n(\B S^2).$
For almost every $x=(x_1,\ldots,x_n)\in\X_n(\B S^2)$ the
$n$-tuple of loops $(\gamma_{x_1,z_1},\ldots,\gamma_{x_n,z_n})$ is
a based loop in the configuration space $\X_n(\B S^2)$.
Let
$$
\gamma(f_t,x)\in \B P_n(\B S^2)=\pi_1(\X_n(\B S^2),z)
$$
be an element represented by this loop.

Let $\overline{\phi}\colon \B P_n(\B S^2)\to \B R$ be a homogeneous quasimorphism.
Define the quasimorphism
$\Phi_n\colon \ \widetilde{\G}\to \B R$ by
\begin{equation*}\label{eq:GG-not-homogeneous}
\Phi_n(\{f_t\}):=\int\limits_{\X_n(\B S^2)}\overline{\phi}(\g(f_t;x))d{x}
\end{equation*}

The fact that the above function is a well defined quasimorphism follows from \cite{BrandenburskyShelukhin}.
Recall that Smale proved that $\pi_1(\G)=\B Z/\B{2Z}$, see \cite{Smale}. Hence the homogenization $\overline{\Phi}_n$ of $\Phi_n,$
being a homomorphism on all abelian subgroups (cf. \cite{CalegariScl}),
descends to a well defined homogeneous quasimorphism $\overline{\Phi}_n\colon \G\to\B R$
which neither depends on the choice of short geodesics, nor on the choice of the base point. For any choice of an isotopy $\{f_{t,k}\}$ between the identity and $f^k$ it can be computed as
\begin{equation*}\label{eq:GG-homogeneous}
\overline{\Phi}_n(f):=\lim_{k\to \infty}\Phi_n(\{f_{t,k}\})/k.
\end{equation*}

\subsubsection{Polterovich construction}
\label{ssec-Polterovich-construction}

Let $g>1$ and $z\in\B\Sigma_g$. Denote by $\Diff_0(\B\Sigma_g, \o)$ the identity component
of the group of area preserving diffeomorphisms of $\B\Sigma_g$ and by $\G$ its subgroup
of Hamiltonian diffeomorphisms. It is known that the group $\pi_1(\B\Sigma_g,z)$ admits
infinitely many linearly-independent homogeneous quasimorphisms, see \cite{EpsteinFujiwara}.
Let
$$
\overline{\phi}\colon\pi_1(\B\Sigma_g,z)\to\B R
$$
be a non-trivial homogeneous quasimorphism.
For each $x\in\B\Sigma_g$ let us choose an arbitrary short geodesic path from $x$ to $z$.
In \cite{PolterovichDynamicsGroups} L. Polterovich constructed the induced
\emph{non-trivial} homogeneous quasimorphism $\overline{\Phi}$ on $\Diff_0(\B\Sigma_g,\o)$ as follows.

For each $x\in\B\Sigma_g$ and an isotopy $\{f_t\}_{t\in[0,1]}$ between $\Id$
and $f$ let $f_x$ be the closed loop in $M$ which is the concatenation of a short geodesic
path from $z$ to $x$, the path $f_t(x)$ and a short geodesic path from $f(x)$ to $z$.
Denote by $[f_x]$ the corresponding element in $\pi_1(\B\Sigma_g,z)$ and set
$$
\Phi(f):=\int\limits_{\B\Sigma_g} \overline{\phi}([f_x])\o\qquad\qquad
\overline{\Phi}(f):=\lim\limits_{k\to\infty}\frac{1}{k}\int\limits_{\B\Sigma_g}\overline{\phi}([(f^k)_x])\o.
$$
The maps $\Phi$ and $\overline{\Phi}$ are well-defined quasimorphisms because the
center $Z(\pi_1(\B\Sigma_g,z))$ is trivial and every diffeomorphism in $\Diff_0(\B\Sigma_g, \o)$
is area-preserving. In addition, the quasimorphism $\overline{\Phi}$ depends neither on the choice
of the family of geodesic paths, nor on the choice of the base point $z$. Moreover, if $\overline{\phi}$
is not a homomorphism, then $\overline{\Phi}$ is not a homomorphism, i.e.
if $\overline{\phi}$ is genuine then $\overline{\Phi}$ is also genuine.
For more details see \cite{PolterovichDynamicsGroups}.

Recall that $\G$ is the commutator subgroup of $\Diff_0(\B\Sigma_g, \o)$ (cf. \cite{BanyagaStructure}).
It follows that every genuine homogeneous quasimorphism $\overline{\phi}$ on $\pi_1(\B\Sigma_g,z)$
defines a genuine homogeneous quasimorphism $\overline{\Phi}_{\G}$ on $\G$ which
is the restriction of $\overline{\Phi}$ to $\G$.

\subsection{Continuity of the Gambaudo-Ghys and Polterovich quasimorphisms}

The aim of this subsection is to prove the following technical results which
will be used in the proof of Theorem \ref{T:quasimorphisms-two-prop}.

\begin{thm}\label{T:Morse-Ham-sphere}
Let $H\colon\B S^2\to\B R$ and $\{H_k\}_{k=1}^\infty$ be a sequence of functions such that
each $H_k\colon\B S^2\to\B R$ and $H_k\xrightarrow[k\rightarrow\infty]{}H$ in
$C^1$-topology. Let $h_1$ and $h_{k,1}$ be the time-one maps of the Hamiltonian flows
generated by $H$ and $H_k$ respectively. Then for each $n$
$$
\lim\limits_{k\to\infty}\overline{\Phi}_n(h_{k,1})=\overline{\Phi}_n(h_1),
$$
where $\overline{\Phi}_n$ is a quasimorphism induced by the Gambaudo-Ghys construction.
\end{thm}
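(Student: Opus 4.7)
The plan is to combine two facts: (i) the Gambaudo--Ghys quasimorphism $\overline{\Phi}_n$ is Lipschitz in the $L^p$-metric, as established in \cite{BrandenburskyShelukhin}, i.e.\ there is $C>0$ with $|\overline{\Phi}_n(g)|\le C\,\|g\|_p$ for all $g\in\G$; and (ii) for each fixed $m\in\B N$ the $L^p$-displacement $\|h_m^{-1}h_{k,m}\|_p$ between the time-$m$ maps $h_m=h_1^m$ and $h_{k,m}=h_{k,1}^m$ (equality of iterates and time-$m$ maps holds because both $H$ and $H_k$ are autonomous) is bounded by $m\cdot M_m\cdot\varepsilon_k\cdot\vol(\B S^2)^{1/p}$, where $\varepsilon_k:=\|X_{H_k}-X_H\|_{C^0}\to 0$ and $M_m:=\sup_{s\in[0,m],\,z\in\B S^2}\|dh_s^{-1}(z)\|_{\rm op}<\infty$.

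To prove (ii), I would consider the isotopy $\{f_{k,s}\}_{s\in[0,m]}$ in $\G$ defined by $f_{k,s}:=h_s^{-1}\circ h_{k,s}$, which interpolates between $\Id$ at $s=0$ and $h_m^{-1}h_{k,m}$ at $s=m$. Using the identity $\partial_s h_s^{-1}(y)=-dh_s^{-1}(y)\cdot X_H(y)$ and the chain rule, the velocity works out to
\[
\dot f_{k,s}(x)=dh_s^{-1}\bigl(h_{k,s}(x)\bigr)\cdot\bigl(X_{H_k}-X_H\bigr)\bigl(h_{k,s}(x)\bigr),
\]
so that the right-invariant generator $Z_{k,s}$ of the isotopy satisfies $\|Z_{k,s}\|_{C^0}\le M_m\cdot\varepsilon_k$. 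Integrating in $s\in[0,m]$ and passing from $L^\infty$ to $L^p$ via the factor $\vol(\B S^2)^{1/p}$ yields the asserted bound on the $L^p$-length, hence on $\|h_m^{-1}h_{k,m}\|_p$. The hypothesis $H_k\to H$ in $C^1$ enters exactly to guarantee $\varepsilon_k\to 0$.

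For (i), the Lipschitz bound together with the quasimorphism defect $D:=D_{\overline{\Phi}_n}$ gives
\[
\bigl|\overline{\Phi}_n(h_{k,m})-\overline{\Phi}_n(h_m)\bigr|\le\bigl|\overline{\Phi}_n(h_m^{-1}h_{k,m})\bigr|+D\le C\|h_m^{-1}h_{k,m}\|_p+D.
\]
By homogeneity the left-hand side equals $m\cdot|\overline{\Phi}_n(h_{k,1})-\overline{\Phi}_n(h_1)|$, so dividing by $m$ and substituting (ii) yields
\[
\bigl|\overline{\Phi}_n(h_{k,1})-\overline{\Phi}_n(h_1)\bigr|\le C\cdot M_m\cdot\varepsilon_k\cdot\vol(\B S^2)^{1/p}+\frac{D}{m}.
\]
Fixing $m$ and sending $k\to\infty$ gives $\limsup_{k\to\infty}|\overline{\Phi}_n(h_{k,1})-\overline{\Phi}_n(h_1)|\le D/m$; then letting $m\to\infty$ yields the claimed convergence.

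The key subtlety is the order of limits. The constant $M_m$ may grow -- possibly exponentially -- with $m$, since $\|dh_s^{-1}\|_{\rm op}$ can grow exponentially near saddle points of $H$. This is, however, harmless: because $M_m$ is multiplied by $\varepsilon_k$, which tends to zero for \emph{any} fixed $m$, one may safely take $k\to\infty$ first and only afterwards let $m\to\infty$ to dispose of the bounded defect $D/m$. The role of iterates is precisely to absorb this defect through homogeneity, without which the pure Lipschitz estimate would only produce a limsup of size $D$ rather than $0$.
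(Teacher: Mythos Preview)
Your proof is correct and follows essentially the same strategy as the paper: combine the Lipschitz property of $\overline{\Phi}_n$ in the $L^p$-metric with a bound on $\B d_p(h^m,h_k^m)$ for fixed $m$ coming from $C^1$-closeness of the Hamiltonians, and then use homogeneity to divide out the quasimorphism defect. The only cosmetic difference is that you spell out the velocity computation for the isotopy $h_s^{-1}h_{k,s}$ explicitly, whereas the paper packages this step as a citation (Lemma~3.7 of \cite{BrandenburskyKedra1}), and you phrase the double limit as a $\limsup$ argument rather than an $\epsilon$--$\delta$ one.
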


\begin{thm}\label{T:Morse-Ham-hyperbolic}
Let $g>1$, $H\colon\B\Sigma_g\to\B R$ and $\{H_k\}_{k=1}^\infty$ be a sequence of functions such that
each $H_k\colon\B\Sigma_g\to\B R$ and $H_k\xrightarrow[k\rightarrow\infty]{}H$ in
$C^1$-topology. Let $h_1$ and $h_{k,1}$ be the time-one maps of the Hamiltonian flows
generated by $H$ and $H_k$ respectively. Then
$$
\lim\limits_{k\to\infty}\overline{\Phi}(h_{k,1})=\overline{\Phi}(h_1),
$$
where $\overline{\Phi}$ is any quasimorphism induced by the Polterovich construction.
\end{thm}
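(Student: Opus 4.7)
The plan is to reduce the claim to the assertion that, for each fixed integer $m$, the (non-homogeneous) Polterovich quasimorphism $\Phi$ already satisfies $\Phi(h_{k,1}^m) = \Phi(h_1^m)$ for all sufficiently large $k$, and then to absorb the gap between $\Phi(f^m)/m$ and $\overline{\Phi}(f)$ using the defect of $\Phi$.

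Let $D$ denote the defect of $\Phi$. A standard telescoping iteration of the defect inequality gives $|\Phi(f^m)/m - \overline{\Phi}(f)| \leq D/m$ for every $f \in \G$ and $m \geq 1$. Given $\eps > 0$, I would fix $m$ with $D/m < \eps/3$. If one can then establish that $\Phi(h_{k,1}^m) = \Phi(h_1^m)$ for all sufficiently large $k$, the triangle inequality yields
\[
|\overline{\Phi}(h_{k,1}) - \overline{\Phi}(h_1)| \leq \frac{2D}{m} < \eps,
\]
which is what we need.

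For the equality, the first step is to observe that $C^1$-convergence $H_k \to H$ yields $C^0$-convergence of the vector fields $X_{H_k} \to X_H$, and by Gronwall's inequality the autonomous flows then satisfy $h_{k,t} \to h_t$ in $C^0$, uniformly in $x$ and in $t$ on the compact interval $[0,m]$. Fix $x \in \B\Sigma_g$ and a lift $\widetilde{x}$ of $x$ to the universal cover $\pi \colon \mathbb{H}^2 \to \B\Sigma_g$, and lift the paths $t \mapsto h_{mt}(x)$ and $t \mapsto h_{k,mt}(x)$ starting at $\widetilde{x}$ to paths $\widetilde{p}(t), \widetilde{p}_k(t)$ in $\mathbb{H}^2$. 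Because $\pi$ is a local isometry and $\B\Sigma_g$ has positive injectivity radius, once the downstairs $C^0$-distance between the paths drops below this radius the lifts remain pointwise close in $\mathbb{H}^2$, with a bound uniform in $x$.

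Now consider the full Polterovich loops, which concatenate the middle paths with a short geodesic from $z$ to $x$ (common to both constructions) and with short geodesics from $h_m(x)$, respectively $h_{k,m}(x)$, back to $z$. Lifting everything starting at a fixed $\widetilde{z} \in \pi^{-1}(z)$, the loops become paths ending at lifts $\widetilde{z}', \widetilde{z}'_k \in \pi^{-1}(z)$, and the deck transformations determined by these endpoints are exactly the classes $[(h_1^m)_x], [(h_{k,1}^m)_x] \in \pi_1(\B\Sigma_g, z)$. Since lifts of short geodesics depend continuously on their starting points and $\pi^{-1}(z)$ is discrete in $\mathbb{H}^2$, once $\widetilde{p}_k(1)$ is close enough to $\widetilde{p}(1)$ we get $\widetilde{z}'_k = \widetilde{z}'$ and hence equality of the two $\pi_1$-classes. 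Uniformity of the flow convergence in $x$ provides a single threshold $k_0 = k_0(m)$ valid for all $x$, so $\overline{\phi}([(h_{k,1}^m)_x]) = \overline{\phi}([(h_1^m)_x])$ pointwise; integrating against $\o$ then yields $\Phi(h_{k,1}^m) = \Phi(h_1^m)$ for $k \geq k_0$.

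The main obstacle is the uniformity in $x$ of the universal-cover comparison: the threshold at which the lifts become close enough to force $\widetilde{z}'_k = \widetilde{z}'$ must be independent of the basepoint $x$. This should follow from the compactness of $\B\Sigma_g$, the fact that the $C^0$-gap between $h_{k,t}$ and $h_t$ is uniform in $x$, and the isometry-invariance of the hyperbolic metric on $\mathbb{H}^2$ under the deck group.
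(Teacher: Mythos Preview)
Your approach is genuinely different from the paper's: the paper does not analyze the $\pi_1$-classes of the Polterovich loops at all. Instead it invokes the previously established fact that $\overline{\Phi}$ is Lipschitz with respect to the $L^3$-metric on $\G$, and then shows that $C^1$-closeness of autonomous Hamiltonians implies $\B d_3(f^p,h^p)<1$ for the time-one maps. Combining this with homogeneity and the quasimorphism bound gives
\[
|\overline{\Phi}(f)-\overline{\Phi}(h)|=\tfrac{1}{p}|\overline{\Phi}(f^p)-\overline{\Phi}(h^p)|\leq\tfrac{1}{p}\bigl(D_{\overline{\Phi}}+K\,\B d_3(f^p,h^p)\bigr)<\tfrac{D_{\overline{\Phi}}+K}{p},
\]
which can be made small. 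No lifting to the universal cover is needed.

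Your argument, by contrast, has a real gap at precisely the point you yourself flag. The assertion that ``lifts of short geodesics depend continuously on their starting points'' cannot hold globally on a closed surface of genus $g\geq 1$: a continuous choice of path from every $y$ to $z$ would contract $\B\Sigma_g$ to a point. Concretely, the short geodesic $s_{yz}$ jumps when $y$ crosses the cut locus of $z$. Thus whenever $h_m(x)$ lies near the cut locus, the endpoints $\widetilde{z}'$ and $\widetilde{z}'_k$ of your lifted loops can land at \emph{different} points of $\pi^{-1}(z)$, no matter how large $k$ is. The set of such $x$ has positive measure for every finite $k$, so the exact equality $\Phi(h_{k,1}^m)=\Phi(h_1^m)$ is false in general, and no single threshold $k_0$ works for all $x$.

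The strategy is salvageable if you weaken the claim from equality to convergence. For each fixed $x$ with $h_m(x)$ \emph{off} the cut locus (a full-measure condition), your lifting argument does give $[(h_{k,1}^m)_x]=[(h_1^m)_x]$ for $k$ large enough depending on $x$; hence $\overline{\phi}([(h_{k,1}^m)_x])\to\overline{\phi}([(h_1^m)_x])$ pointwise a.e. A uniform bound $|\overline{\phi}([(h_{k,1}^m)_x])|\leq D_{\overline{\phi}}\cdot\ell([(h_{k,1}^m)_x])$, together with the fact that the loop lengths are bounded by $2\,\diam(\B\Sigma_g)+m\cdot\sup_k\|X_{H_k}\|_{C^0}$, supplies an integrable dominating function. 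Dominated convergence then gives $\Phi(h_{k,1}^m)\to\Phi(h_1^m)$, and your $D/m$ estimate finishes the job. As written, though, the uniformity claim is incorrect.
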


\begin{proof}
In \cite{BrandenburskyShelukhin} the authors proved the following

\begin{thm}[\cite{BrandenburskyShelukhin}]\label{T:lipschitz-sphere}
Let $n>0$ and $\overline{\Phi}_n$ be a homogeneous quasimorphism induced by the Gambaudo-Ghys construction.
Then $\overline{\Phi}_n$ is Lipschitz with respect to the $L^3$-metric on the group $\G$ of Hamiltonian
diffeomorphisms of $\B S^2$, i.e. there exists $C>0$ such that $\forall h\in\G$
$$
\overline{\Phi}_n(h)\leq C\|h\|_3.
$$
\end{thm}
In addition, in \cite{BrandenburskyLpMetrics} the first named author proved the following

\begin{thm}[\cite{BrandenburskyLpMetrics}]\label{T:lipschitz-hyperbolic}
Let $\B \Sigma_g$ be a closed hyperbolic surface, and $\overline{\Phi}$ a homogeneous
quasimorphism induced by the Polterovich construction. Then $\overline{\Phi}$ is Lipschitz
with respect to the $L^3$-metric on the group $\G$ of Hamiltonian diffeomorphisms of $\B\Sigma_g$,
i.e. there exists $C'>0$ such that $\forall h\in\G$
$$
\overline{\Phi}(h)\leq C'\|h\|_3.
$$
\end{thm}

\begin{lem}\label{lem:epsilon-close}
Let $g\neq 1$ and $H\colon\B\Sigma_g\to\B R$ be a smooth function.
Then for any $\epsilon>0$ and $p\in\B N$ there exists $\delta_p>0$, such that if
$H$ is $\delta_p$-close to a smooth function $F\colon\B\Sigma_g\to\B R$ in $C^1$-topology, then
$$
\B d_3(h^p,f^p)<\epsilon,
$$
where $h_t$ and $f_t$ are the Hamiltonian flows generated by $H$ and $F$ respectively,
and $h$ and $f$ are time-one maps of these flows.
\end{lem}
\begin{proof}
We replace $\B D^2$ by $\B\Sigma_g$ and $\B d_2$ with $\B d_3$ in the proof of Lemma 3.7 in \cite{BrandenburskyKedra1}.
Now the proof is identical to the proof of Lemma 3.7 in \cite{BrandenburskyKedra1}.
\end{proof}

\begin{prop}\label{P:Morse-Ham-epsilon}
Let $g\neq 1$ and $H\colon\B\Sigma_g\to\B R$. Then for any $\epsilon>0$ there exists $\delta>0$,
such that if $F\colon\B\Sigma_g\to\B R$ is $\delta$-close to $H$ in $C^1$-topology then:
$$
\left|\overline{\Psi}(h)-\overline{\Psi}(f)\right|\leq\epsilon,
$$
where $\overline{\Psi}=\overline{\Phi}_n$ in case when $\B\Sigma_g=\B S^2$, and
$\overline{\Psi}=\overline{\Phi}$ in the higher genus case, and $h$ and $f$
are time-one maps of flows generated by $H$ and $F$.
\end{prop}

\begin{proof}
Fix some $\epsilon>0$. Denote by $K$ the constant which was defined in
Theorem \ref{T:lipschitz-sphere} in case of genus zero
(it was denoted by $C$), and in Theorem \ref{T:lipschitz-hyperbolic}
in the higher genus case (it was denoted by $C'$).
Take $p\in\B N$ such that $\frac{D_{\overline{\Psi}}+K}{p}<\epsilon$. It
follows from Lemma \ref{lem:epsilon-close} that there exists $\delta_p>0$, such
that if $F$ is $\delta_p$-close to $H$ in $C^1$-topology, then
$\B d_3(f^p,h^p)<1$. Thus we obtain
$$
\left|\overline{\Psi}(f)-\overline{\Psi}(h)\right|=
\frac{1}{p}\left|\overline{\Psi}(f^p)-\overline{\Psi}(h^p)\right|\leq
\frac{D_{\overline{\Psi}}+\left|\overline{\Psi}(f^ph^{-p})\right|}{p}.
$$
Depending on the case, it follows from Theorem \ref{T:lipschitz-sphere}
or from Theorem \ref{T:lipschitz-hyperbolic} that
$$
\left|\overline{\Psi}(f^ph^{-p})\right|\leq K\B d_3(\Id,f^ph^{-p})=K\B d_3(f^p,h^p)<K.
$$
Thus
$$
\left|\overline{\Psi}(f)-\overline{\Psi}(h)\right|<\frac{D_{\overline{\Psi}}+K}{p}<\epsilon.
$$
\end{proof}
Proposition \ref{P:Morse-Ham-epsilon} concludes the proof of
Theorems \ref{T:Morse-Ham-sphere} and \ref{T:Morse-Ham-hyperbolic}.
\end{proof}

\subsection{Proof of Theorem \ref{T:quasimorphisms-two-prop} - the spherical case}

Let $\B B_n$ and $\B B_n(\B S^2)$ be the standard Artin braid group and
the spherical braid group on $n$ strands respectively. The group $\B B_n$ admits the following presentation:
$$
\B B_n=\langle\s_1,\ldots,\s_{n-1}|\hspace{2mm} \s_i\s_j=\s_j\s_i,\hspace{2mm}|i-j|\geq2;\hspace{2mm}\s_i\s_{i+1}\s_i=\s_{i+1}\s_i\s_{i+1},\rangle
$$
where $\s_i$ is the $i$-th Artin generator of $\B B_n$. The group $\B B_n(\B S^2)$ has
the same generators and relations as $\B B_n$ and one extra relation given by
$$
\delta_n:=\s_1\ldots\s_{n-2}\s_{n-1}^2\s_{n-2}\ldots\s_1=1.
$$
It follows that these relations define an epimorphism
$$
\Pi\colon \B B_n\to \B B_n(\B S^2).
$$
Note that if $n>3$ then both $\B B_n$ and $\B B_n(\B S^2)$ are infinite groups.
Fix $n>3$ and let $\eta_{i,n}:=\s_{i-1}\ldots\s_2\s_1^2\s_2\ldots\s_{i-1}\in \B B_n$,
be the braid presented in Figure \ref{fig:braids-eta-i-n}.
%%%%%%%%%%%%%%%%%%%%%%%%%%%%%%%%%%%%%%%%%%%%%%%%%%%%%%%%%%%%%%%%%%%%%%%
\begin{figure}[htb]
\centerline{\includegraphics[height=1.7in]{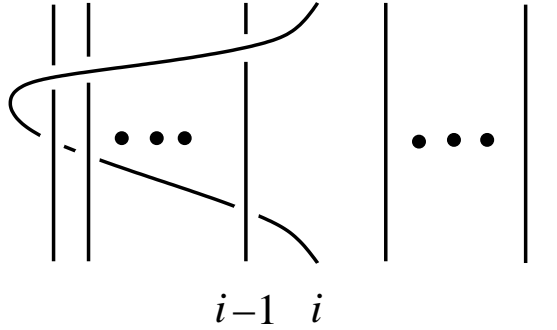}}
\caption{\label{fig:braids-eta-i-n} The braid $\eta_{i,n}$.}
\end{figure}
%%%%%%%%%%%%%%%%%%%%%%%%%%%%%%%%%%%%%%%%%%%%%%%%%%%%%%%%%%%%%%%%%%%%%

Denote by $\B A_n<\B B_n$ the free abelian group of rank $n-1$
generated by $\{\eta_{i,n}\}_{i=2}^n$,
and by $\B A_n(\B S^2)<\B B_n(\B S^2)$ the abelian group $\Pi(\B A_n)$.

For a group $\Gamma$ let us denote the space of homogeneous
quasimorphisms on $\Gamma$ by $Q(\Gamma)$. Since the mapping class group of
an $n$-punctured sphere is isomorphic to the quotient of $\B B_n(\B S^2)$,
and by results of Bestvina-Fujiwara \cite{BestvinaFujiwara} the space of
homogeneous quasimorphisms on the mapping class group of an $n$-punctured
sphere is infinite dimensional (recall that $n>3$), it follows that $\dim(Q(\B B_n(\B S^2)))=\infty$.
The group $\B A_n(\B S^2)$ is abelian of finite rank, hence
$$
\dim(Q(\B B_n(\B S^2), \B A_n(\B S^2)))=\infty,
$$
where $Q(\B B_n(\B S^2), \B A_n(\B S^2))$
is the subspace of $Q(\B B_n(\B S^2))$ which consists of homogeneous
quasimorphisms on $\B B_n(\B S^2)$ which vanish on $\B A_n(\B S^2)$.

Let $Q(\G,\A)$ be the space of homogeneous quasimorphisms on $\G$ which
vanish on the set $\A$ of autonomous diffeomorphisms. Also denote by
$$
\mathfrak{GG}_n\colon Q(\B B_n(\B S^2))\to Q(\G)
$$
the linear map which is given by the Gambaudo-Ghys construction.
By the result of Ishida \cite{Ishida} the map $\mathfrak{GG}_n$ is injective for all $n$.
Since for $n>3$ we have
$$
\dim(Q(\B B_n(\B S^2), \B A_n(\B S^2)))=\infty
$$
 and $\mathfrak{GG}_n$ is injective,
then in order to show that
$$\dim(Q(\G, \A))=\infty$$
it is enough to show that
$$
\Im(\mathfrak{GG}_n|_{Q(\B B_n(\B S^2), \B A_n(\B S^2))})\subset Q(\G,\A).
$$

Let $p>2$. In \cite{BrandenburskyShelukhin} the authors proved that every quasimorphism
which lies in the image of the Gambaudo-Ghys map $\mathfrak{GG}_n$
is Lipschitz with respect to the $L^p$-metric on $\G$.
It follows that in order to complete the proof, it is enough to prove the following.

\begin{prop}\label{P:GG-autonomous-injective}
Let $\overline{\phi}\colon \B B_n(\B S^2)\to\B R$ be a homogeneous quasimorphism
which vanishes on $\B A_n(\B S^2)$. Then the induced homogeneous
quasimorphism $\overline{\Phi}_n\colon\G\to\B R$ vanishes on the set $\A$.
\end{prop}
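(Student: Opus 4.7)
The plan is to reduce to the case where $H$ is Morse using Theorem \ref{T:Morse-Ham-sphere}, then show that the braid produced by a Morse autonomous Hamiltonian flow on $\B S^2$ lies in a conjugate of $\B A_n(\B S^2)$ up to a uniformly bounded correction. Given $h=h_1\in\A$ with generating Hamiltonian $H$, I would approximate $H$ by a sequence $H_k\to H$ in the $C^1$-topology with each $H_k$ Morse; by the density of Morse functions and Theorem \ref{T:Morse-Ham-sphere}, $\overline{\Phi}_n(h_{k,1})\to\overline{\Phi}_n(h)$, so it suffices to treat the Morse case.

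For Morse $H$, the critical points together with the stable and unstable manifolds of saddles decompose $\B S^2$ into finitely many open disks $D_1,\ldots,D_m$, each containing a unique local extremum, on which the flow is conjugate to a rotational flow on the level circles. Using the isotopy $\{h_{kt}\}_{t\in[0,1]}$ from $\Id$ to $h^k$, I would, for a generic $x=(x_1,\ldots,x_n)\in\X_n(\B S^2)$ avoiding the separatrices, analyze the braid $\gamma(\{h_{kt}\},x)$ basin by basin. Within each basin, trajectories stay on their level circles and never cross, so, as in the disk case of \cite{BrandenburskyKedra1}, the contribution of each basin is, up to a bounded correction coming from fractional rotations and the geodesic closures, the $k$-th power of a pure braid generated by $\eta_{i,n}$-type wraps. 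Since distinct basins have disjoint supports, their contributions commute. Relabeling the strands basin by basin via a fixed permutation $\s$ matching the ordering used in the definition of $\B A_n$, I expect to express the total braid in $\B B_n(\B S^2)$ as $\s\alpha^k\s^{-1}\beta_{k,x}$, with $\alpha\in\B A_n(\B S^2)$ and $\beta_{k,x}$ of braid word length bounded uniformly in $k$ and a.e.\ $x$.

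Because $\overline{\phi}$ is homogeneous it is conjugation invariant, and by hypothesis it vanishes on $\B A_n(\B S^2)$; hence $\overline{\phi}(\s\alpha^k\s^{-1})=k\overline{\phi}(\alpha)=0$. Combined with the uniform length bound on $\beta_{k,x}$, the quasimorphism defect yields $|\overline{\phi}(\gamma(\{h_{kt}\},x))|\leq C$ for a constant $C$ independent of $k$ and $x$, and dominated convergence then gives $\Phi_n(\{h_{kt}\})/k\to 0$, whence $\overline{\Phi}_n(h)=\lim_k\Phi_n(\{h_{kt}\})/k=0$. The hardest step will be this structural identification of $\gamma(\{h_{kt}\},x)$ as a conjugate of an element of $\B A_n(\B S^2)$ up to bounded correction: it demands careful bookkeeping of the fractional rotations and of the closing-up geodesics, and will likely use the sphere relation $\delta_n=1$ to ensure that the cross-basin assembly lands in the prescribed abelian subgroup $\B A_n(\B S^2)$ rather than in some larger subgroup of $\B B_n(\B S^2)$.
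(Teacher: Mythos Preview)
Your plan matches the paper's in outline: reduce to Morse $H$ via density and Theorem~\ref{T:Morse-Ham-sphere}, then exploit the special form of the braid $\gamma(h_k,x)$ traced by the autonomous flow, together with conjugation invariance of $\overline{\phi}$ and the hypothesis $\overline{\phi}|_{\B A_n(\B S^2)}=0$. The paper restricts to configurations $x$ with all $x_i$ on distinct regular level curves of $H$ (a full-measure set) and records the identity
\[
\gamma(h_k,x)=\a_{h,k,x}\,\b_1^{m_{1,h,k,x}}\cdots\b_{n-1}^{m_{n-1,h,k,x}}\,\a'_{h,k,x},
\]
where the $\b_i$ pairwise commute, each $\b_i$ is conjugate in $\B B_n(\B S^2)$ to some $\Pi(\eta_{j,n})$, and the boundary words $\a_{h,k,x},\a'_{h,k,x}$ have length bounded independently of $k$.

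Your structural claim is stronger than this, and as stated the ``hardest step'' would not go through. First, the braid is not ``the $k$-th power'' of a fixed element: distinct level circles carry distinct rotation numbers, so the exponents $m_{i,h,k,x}$ grow like $k\rho_i$ with generically different (and irrational) $\rho_i$, not with a common exponent $k$. Second, and more seriously, writing the whole braid as $\sigma\alpha^k\sigma^{-1}\beta_{k,x}$ with a \emph{single} conjugator $\sigma$ and $\alpha\in\B A_n(\B S^2)$ is too optimistic. The generators $\eta_{i,n}$ all encode a wrap of strand $i$ about strand~$1$; when the $x_i$ lie in several basins with distinct extrema there is no single strand playing the role of ``center'', and the elementary wraps are conjugate to the $\Pi(\eta_{j,n})$ by \emph{different} conjugating elements. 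The product of such commuting conjugates need not lie in one conjugate of $\B A_n(\B S^2)$, and the sphere relation $\delta_n=1$ does not force it to.

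What survives, and what the paper actually uses, is precisely that each $\b_i$ is \emph{individually} conjugate to some $\Pi(\eta_{j,n})$; hence $\overline{\phi}(\b_i)=0$ by conjugation invariance, and since $\overline{\phi}$ restricts to a homomorphism on the abelian group $\langle\b_1,\dots,\b_{n-1}\rangle$, one obtains $\overline{\phi}\bigl(\prod_i\b_i^{m_i}\bigr)=\sum_i m_i\,\overline{\phi}(\b_i)=0$. With this weaker decomposition in place, your concluding argument (divide by $k$, integrate, pass to the limit) is exactly the paper's.
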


\begin{proof}
Since Morse functions on $\B S^2$ form a dense subset in the set of
all smooth functions in the $C^1$-topology \cite{Milnor}, by Theorem \ref{T:Morse-Ham-sphere}
it is enough to prove the statement for Morse autonomous diffeomorphisms.
We say that a Hamiltonian diffeomorphism $h$ is Morse autonomous
if it is generated by some Morse function $H\colon\B S^2\to\B R$.
The set of all Morse autonomous diffeomorphisms is denoted by $\MA$.

The idea of the proof relies on the fact that $n$ points on different level curves of $H$ trace a braid
which is "almost conjugate" to a braid in $\B A_n(\B S^2)$.
More precisely, let $h\in\MA$ which is generated by a function $H$ and take a
point $x=(x_1,\ldots,x_n)\in\X_n(\B S^2)$ such that each
$x_i$ lies on a different regular level set of $H$. Let $\{h_t\}_{t \in [0,\infty)}$ be the Hamiltonian isotopy generated by $H$ and put $h:=h_1$.
We have the identity of braids
$$
\g(h_k,x)=\a_{h,k,x}\b_1^{m_{1,h,k,x}}\ldots\b_{n-1}^{m_{n-1,h,k,x}}\a'_{h,k,x},
$$
where the word length of the braids $\a_{h,k,x}$ and $\a'_{h,k,x}$ is
universally bounded by some natural number $C$ which depends only on $n$,
all the braids $\b_i$ commute with each other and each $\b_i$ is conjugate
in $\B B_n(\B S^2)$ to some $\Pi(\eta_{j,n})\in \B A_n(\B S^2)$. Note that a similar identity of braids in the case of a disc was established in \cite[Theorem 4.5]{BrandenburskyKnots}, cf. \cite{BrandenburskyThesis}.
It follows that
\begin{eqnarray*}
\lim_{k\to\infty}\frac{|\overline{\phi}(\g(h_k,x))|}{k}&\leq&
\lim_{k\to\infty}\frac{|\overline{\phi}(\a_{h,k,x})|+|\overline{\phi}(\a'_{h,k,x})|}{k}\hspace{1mm}\\
&+&\lim_{k\to\infty}\frac{\sum_{i=1}^{n-1}|m_{i,h,k,x}||\overline{\phi}(\b_i)|+2D_{\overline{\phi}}}{k}.
\end{eqnarray*}
Since $\overline{\phi}$ is a homogeneous quasimorphism, the value of $\overline{\phi}$ on conjugate elements is the same and by our hypothesis
$\overline{\phi}\in Q(\B B_n(\B S^2), \B A_n(\B S^2))$, that is
$\overline{\phi}(\Pi(\eta_{i,n}))=0$ for each $i$, we conclude that
$$
\lim_{k\to\infty}\frac{|\overline{\phi}(\g(h_k,x))|}{k}=0.
$$

Recall that $H$ is a Morse function, and so it has finitely many critical points.
Thus the complement in $\X_n(\B S^2)$ of the set of all the $n$-tuples of different points in $\B S^2$ which
lie on different regular level sets of $H$ is of measure zero. Note that since $h$ is an autonomous diffeomorphism we have $h^k=h_k$. Therefore
$$
\overline{\Phi}_n(h)=\int\limits_{\X_n(\B S^2)}\lim_{k\to\infty}\frac{|\overline{\phi}(\g(h^k,x))|}{k}\thinspace d{x}=
\int\limits_{\X_n(\B S^2)}\lim_{k\to\infty}\frac{|\overline{\phi}(\g(h_k,x))|}{k}\thinspace d{x}=0
$$
and the proof follows.
\end{proof}

\subsection{Proof of Theorem \ref{T:quasimorphisms-two-prop} - the hyperbolic case}

\subsubsection{Curves traced by Morse autonomous flows}
\label{sec-curves-aut-flows}
Let $\{h_t\}$ be an autonomous flow generated by a Morse function $H\colon\B\Sigma_g\to\B R$
and set $h:=h_1$. Let $x\in\B\Sigma_g$ which satisfies the following conditions:

\begin{itemize}
\item
$x$ is a regular point of $H$,
\item
$x$ belongs to only one connected component, i.e. a simple closed curve in $\B\Sigma_g$, of the set $H^{-1}(H(x))$.
\end{itemize}

Such a set of points in $\B\Sigma_g$ is denoted by $\mathrm{Reg}_H$.
Note that the measure of $\B\Sigma_g\setminus\mathrm{Reg}_H$ is zero.
For each $x\in\mathrm{Reg}_H $ let $c_x\colon[0,1]\to\B\Sigma_g$
be an injective path (on $(0,1)$), such that $c_x(0)=c_x(1)=x$ and
its image is a simple closed curve which is a connected component of $H^{-1}(H(x))$.
For every $y_1,y_2\in \B\Sigma_g$ choose an injective map $s_{y_1y_2}\colon[0,1]\to \B\Sigma_g$
whose image is a short geodesic path from $y_1$ to $y_2$. Define
\begin{equation}\label{eq:gamma-reg}
\gamma_x(t):=
\begin{cases}
s_{zx}(3t) &\text{ for } t\in \left [0,\frac13\right ]\\
c_x(3t-1) &\text{ for } t\in \left [\frac13,\frac23\right ]\\
s_{xz}(3t-2) & \text{ for } t\in \left [\frac23,1\right ]
\end{cases}.
\end{equation}
Denote by $[\gamma_x]$ the corresponding element in $\pi_1(\B\Sigma_g, z)$.
Let $x\in\mathrm{Reg}_H$ and let $[h_x]$ be an element in $\pi_1(\B\Sigma_g, z)$
represented by a path which is a concatenation of paths $s_{zx}$, $h_t(x)$ and $s_{h(x)z}$.
Then for each $k\in\B N$ the element $[h^k_x]$ can be written as a product
\begin{equation}\label{eq:braid-morse-aut}
[h^k_x]=\a'_{h,k,x}\circ[\gamma_{x}]^{m_{h,k,x}}\circ\a''_{h,k,x}\thinspace,
\end{equation}
where $m_{h,k,x}$ is an integer which depends only $h$, $k$ and $x$, and the word
length of elements $\a'_{h,k,x}\thinspace, \a''_{h,k,x}$ in $\pi_1(\B\Sigma_g, z)$ is
bounded by some constant $C_{h,x}$ which is independent of $k$.

Denote by $\mathcal{MCG}_g^1$ the mapping class group of the surface $\B\Sigma_g$
with one puncture $z$. Recall that there is the following short exact sequence due to Birman \cite{Birman}
\begin{equation}\label{eq:Birman-SES}
1\to \pi_1(\B\Sigma_g, z)\to\mathcal{MCG}_g^1\to\mathcal{MCG}_g\to 1,
\end{equation}
where $\mathcal{MCG}_g$ is the mapping class group of the surface $\B\Sigma_g$.
Hence we view $\pi_1(\B\Sigma_g, z)$ as a normal subgroup of $\mathcal{MCG}_g^1$.

\begin{prop}\label{P:finite-conj-classes}
Let $g>1$. There exists a finite set $S_g$ of elements in $\mathcal{MCG}_g^1$,
such that for every Morse function $H\colon\B\Sigma_g\to\B R$ and every $x\in\mathrm{Reg}_H$
the loop $[\g_x]\in\pi_1(\B\Sigma_g, z)<\mathcal{MCG}_g^1$
is conjugate to some element in $S_g$.
\end{prop}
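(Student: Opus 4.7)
The plan is to reduce the proposition to the classical finiteness of $\mathcal{MCG}_g$-orbits on free homotopy classes of simple closed curves on $\B\Sigma_g$, via the Birman exact sequence.

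First I would observe that the loop $\gamma_x$ defined in \eqref{eq:gamma-reg} is freely homotopic to the oriented simple closed curve $c_x$. Indeed, it is the concatenation $s_{zx} \cdot c_x \cdot s_{xz}$, so its based class at $z$ is a conjugate (by $[s_{zx}]$) of the based class of $c_x$ at $x$. Consequently, the conjugacy class of $[\g_x]$ in $\pi_1(\B\Sigma_g, z)$ is precisely the free homotopy class in $\B\Sigma_g$ represented by the embedded circle $c_x$, and in particular it is the trivial class whenever $c_x$ bounds a disk.

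Next I would translate this into a statement about $\mathcal{MCG}_g^1$-conjugacy using the short exact sequence \eqref{eq:Birman-SES}. Conjugation of $[\g_x] \in \pi_1(\B\Sigma_g, z)$ by an element lying inside the normal subgroup $\pi_1(\B\Sigma_g, z)$ realizes ordinary conjugation in $\pi_1$, i.e.\ the passage to free homotopy classes. Conjugation by an element $\til{\phi} \in \mathcal{MCG}_g^1$ projecting to $\phi \in \mathcal{MCG}_g$ induces on $\pi_1(\B\Sigma_g, z)$ the automorphism $\phi_*$ obtained from the topological action of $\phi$ on loops (using the standard identification of $\mathcal{MCG}_g^1$ with the group of isotopy classes of homeomorphisms fixing $z$, and of $\mathcal{MCG}_g$ acting through $\Out(\pi_1)$ by Dehn--Nielsen--Baer). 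Therefore two elements of $\pi_1(\B\Sigma_g, z)$ are conjugate in $\mathcal{MCG}_g^1$ if and only if the corresponding free homotopy classes in $\B\Sigma_g$ lie in a common $\mathcal{MCG}_g$-orbit.

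Finally I would invoke the classical topological classification of simple closed curves on the closed orientable surface $\B\Sigma_g$: up to the action of $\mathcal{MCG}_g$ on free homotopy classes, an essential simple closed curve is determined by whether it is non-separating, or separating of topological type $(k,g-k)$ for some $1 \leq k \leq \lfloor g/2 \rfloor$; together with the null-homotopic class, this gives finitely many $\mathcal{MCG}_g$-orbits (at most doubled if one tracks orientations). Picking one loop representative in $\pi_1(\B\Sigma_g, z) \subset \mathcal{MCG}_g^1$ from each of these finitely many orbits produces the desired set $S_g$. The main conceptual step is the Birman-sequence translation in the middle paragraph, but once one writes the action of a lift $\til{\phi}$ on the point-pushing class as $\phi_*$ this is standard; the classification of simple closed curves is a well-known topological fact and requires no additional work.
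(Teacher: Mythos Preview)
Your proposal is correct and follows essentially the same approach as the paper: both reduce the statement, via the Birman exact sequence, to the finiteness of topological types of simple closed curves on $\B\Sigma_g$ up to homeomorphism. The only difference is presentational---the paper identifies the point-pushing class $[\g_x]$ explicitly as (a conjugate of) the Dehn-twist product $t_{c_{x,+}}\circ t_{c_{x,-}}^{-1}$ about the two boundary curves of a tubular neighborhood of $c_x$ and then conjugates by the classifying homeomorphism, whereas you extract the same conclusion directly from the conjugation action in the short exact sequence.
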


\begin{proof} Let $x\in\mathrm{Reg}_H$. If the loop $\g_x(t)$ is homotopically trivial in $\B\Sigma_g$,
then $[\g_x]=1_{\mathcal{MCG}_g^1}$. Suppose that $\g_x(t)$ is homotopically non-trivial in $\B\Sigma_g$.
We say that simple closed curves $\delta,\delta'\in\B\Sigma_g$ are equivalent $\delta\cong\delta'$,
if there exists a homeomorphism $f\colon\B\Sigma_g\to\B\Sigma_g$ such that $f(\delta)=\delta'$.
It follows from classification of surfaces that the set of equivalence classes $\mathcal{E}_g$ is finite.
Let $c_x$ be the simple closed curve defined in \eqref{eq:gamma-reg}. Since $\B\Sigma_g$ and $c_x$ are oriented,
the curve $c_x$ splits in $\B\Sigma_g\setminus\{x\}$ into two simple closed curves $c_{x,+}$ and $c_{x,-}$
which are homotopic in $\B\Sigma_g$, i.e. $c_{x,+}$ and $c_{x,-}$ are boundary curves
of a tubular neighborhood of the curve $c_x$, see Figure \ref{fig:path-split}.
%%%%%%%%%%%%%%%%%%%%%%%%%%%%%%%%%%%%%%%%%%%%%%%%%%%%%%%%%%%%%%%%%%%%%%%
\begin{figure}[htb]
\centerline{\includegraphics[height=1.3in]{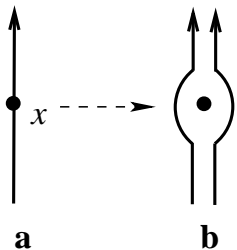}}
\caption{\label{fig:path-split} Part of the curve $c_x$ is shown in Figure \textbf{a}.
Its splitting into curves $c_{x,+}$ and $c_{x,-}$ is shown in Figure \textbf{b}.
The left curve in Figure \textbf{b} is $c_{x,+}$ and the right curve is $c_{x,-}.$}
\end{figure}
%%%%%%%%%%%%%%%%%%%%%%%%%%%%%%%%%%%%%%%%%%%%%%%%%%%%%%%%%%%%%%%%%%%%%

The image of the element $[\g_x]$ in $\mathcal{MCG}_g^1$, under the Birman
embedding \eqref{eq:Birman-SES} of $\pi_1(\B\Sigma_g,z)$ into $\mathcal{MCG}_g^1$,
is conjugate to $t_{c_{x,+}}\circ t^{-1}_{c_{x,-}}$, where $t_{c_{x,+}}$ and $t_{c_{x,-}}$
are Dehn twists in $\B\Sigma_g\setminus\{x\}$ about curves $c_{x,+}$ and $c_{x,-}$ respectively, see e.g. \cite[Fact 4.7]{FarbMargalit}.

Note that if $c_x\cong\delta$ then there exists a homeomorphism $f\colon\B\Sigma_g\to\B\Sigma_g$
such that $f(c_x)=\delta$, hence $f(c_{x,+})=\delta_+$ and $f(c_{x,-})=\delta_-$. We have
$$
t_{\delta_+}=f\circ t_{c_{x,+}}\circ f^{-1}\qquad t_{\delta_-}=f\circ t_{c_{x,-}}\circ f^{-1}.
$$
This yields
$$
f\circ(t_{c_{x,+}}\circ t^{-1}_{c_{x,-}})\circ f^{-1}=t_{\delta_+}\circ t^{-1}_{\delta_-}.
$$
In other words, an element $[\g_x]$ is conjugate in $\mathcal{MCG}_g^1$ to
some $t_{\delta_+}\circ t^{-1}_{\delta_-}$, where $\delta$ is a representative of
an equivalence class in $\mathcal{E}_g$. Let $\{\delta_i\}_{i=1}^{\#\mathcal{E}_g}$
be a set of simple closed curves in $\B\Sigma_g$, such that each equivalence class
in $\mathcal{E}_g$ is represented by some $\delta_i$. Let
\begin{equation}\label{eq:finite-set-S}
S_g:=\{t_{\delta_{1,+}}\circ t^{-1}_{\delta_{1,-}},\ldots,t_{\delta_{\#\mathcal{E}_g,+}}\circ t^{-1}_{\delta_{\#\mathcal{E}_g,-}}\}.
\end{equation}
It follows that $[\g_x]$ is conjugate to some element in $S_g$.
Noting that the set $S_g$ depends neither on $H$ nor on $x$, we conclude the proof of the proposition.
\end{proof}

\subsubsection{Mapping class group considerations}

Recall that the group $\pi_1(\B\Sigma_g)$ is a normal subgroup of $\mathcal{MCG}_g^1$.
Denote by $Q_{\mathcal{MCG}_g^1}(\pi_1(\B\Sigma_g), S_g)$ the space of
homogeneous quasi-morphisms on $\pi_1(\B\Sigma_g)$ so that:
\begin{itemize}
\item
For each $\overline{\phi}\in Q_{\mathcal{MCG}_g^1}(\pi_1(\B\Sigma_g), S_g)$
there exists $\widehat{\phi}\in Q(\mathcal{MCG}_g^1)$ such that $\widehat{\phi}|_{\pi_1(\B\Sigma_g)}=\overline{\phi}$,
\item
each $\overline{\phi}$ vanishes on the finite set $S_g$,
\end{itemize}
where $S_g$ is the set defined in \eqref{eq:finite-set-S}. The group $\pi_1(\B\Sigma_g)$
contains a non-abelian free group, and thus is not virtually abelian. It is an infinite
normal subgroup of $\mathcal{MCG}_g^1$ and hence is a non-reducible subgroup of $\mathcal{MCG}_g^1$, see
\cite[Corollary 7.13]{Ivanov}. Now, by a result of Bestvina-Fujiwara
\cite[Theorem 12]{BestvinaFujiwara} we have the following

\begin{cor}\label{cor:inf-dim-qm-restriction}
The space $Q_{\mathcal{MCG}_g^1}(\pi_1(\B\Sigma_g), S_g)$ is infinite dimensional.
\end{cor}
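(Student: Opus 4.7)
The plan is to invoke the Bestvina--Fujiwara theorem \cite[Theorem 12]{BestvinaFujiwara} to produce an infinite-dimensional source of homogeneous quasi-morphisms on $\mathcal{MCG}_g^1$ whose restrictions to $\pi_1(\B\Sigma_g)$ are linearly independent, and then impose the finite-codimensional vanishing condition on $S_g$ by elementary linear algebra.

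I would begin by assembling the hypotheses recorded immediately preceding the corollary: $\pi_1(\B\Sigma_g)$ is an infinite normal subgroup of $\mathcal{MCG}_g^1$, it contains a non-abelian free subgroup (hence is not virtually abelian), and by Ivanov's criterion \cite[Corollary 7.13]{Ivanov} it is a non-reducible subgroup of $\mathcal{MCG}_g^1$. Under these hypotheses, Bestvina--Fujiwara's construction, applied to the action of $\mathcal{MCG}_g^1$ on the curve complex of the once-punctured surface $\B\Sigma_g \setminus \{z\}$, produces an infinite-dimensional subspace $V \subset Q(\mathcal{MCG}_g^1)$ such that the restriction map
\[
V \to Q(\pi_1(\B\Sigma_g)), \qquad \widehat{\phi} \mapsto \widehat{\phi}|_{\pi_1(\B\Sigma_g)},
\]
is injective. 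This is the step that does all the real work; it reduces to the fact that $\pi_1(\B\Sigma_g)$, being non-reducible and non-virtually-abelian, contains two independent pseudo-Anosov elements of $\mathcal{MCG}_g^1$ acting on the curve complex with the WPD property, which is precisely the input of \cite[Theorem 12]{BestvinaFujiwara}.

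Next, I would cut $V$ down by imposing vanishing on the finite set $S_g \subset \pi_1(\B\Sigma_g) \subset \mathcal{MCG}_g^1$. Concretely, consider the evaluation map
\[
\ev \colon V \to \B R^{\#S_g}, \qquad \widehat{\phi} \mapsto (\widehat{\phi}(s))_{s \in S_g}.
\]
Since the target is finite-dimensional and $V$ is infinite-dimensional, the kernel $V_0 := \ker(\ev)$ is infinite-dimensional, and every element of $V_0$ vanishes on $S_g$ by construction. By the injectivity of the restriction map on $V$, the composition $V_0 \hookrightarrow V \to Q(\pi_1(\B\Sigma_g))$ is still injective, and each $\overline{\phi}$ in its image vanishes on $S_g$ and admits an extension $\widehat{\phi} \in Q(\mathcal{MCG}_g^1)$ (namely the corresponding preimage in $V_0$). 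The image therefore lies in $Q_{\mathcal{MCG}_g^1}(\pi_1(\B\Sigma_g), S_g)$, proving that this space is infinite-dimensional.

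The only real obstacle is the precise invocation of Bestvina--Fujiwara: one must ensure that Theorem 12 provides not merely an infinite-dimensional family of quasi-morphisms on the subgroup $\pi_1(\B\Sigma_g)$ itself but one arising by restriction from the ambient $\mathcal{MCG}_g^1$. This is exactly the content of their WPD construction on the curve complex, so beyond carefully verifying their hypotheses for $\mathcal{MCG}_g^1$ and its normal subgroup $\pi_1(\B\Sigma_g)$, no new technical input is needed.
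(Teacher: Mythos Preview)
Your proof is correct and follows the same approach as the paper: verify the hypotheses of Bestvina--Fujiwara \cite[Theorem~12]{BestvinaFujiwara} for the subgroup $\pi_1(\B\Sigma_g) \lhd \mathcal{MCG}_g^1$ via Ivanov's non-reducibility criterion, then cut down by the finite-codimensional vanishing condition on $S_g$. The paper states the corollary as an immediate consequence of the preceding paragraph without spelling out the linear-algebra step; you have simply made that step explicit. One cosmetic remark: you write $S_g \subset \pi_1(\B\Sigma_g)$, but as defined in \eqref{eq:finite-set-S} the elements of $S_g$ are products of Dehn twists in $\mathcal{MCG}_g^1$ and need not literally lie in the image of the Birman embedding --- this is harmless for your argument, since you impose the vanishing on $V \subset Q(\mathcal{MCG}_g^1)$ before restricting.
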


\subsubsection{End of the proof}

Denote by $\mathfrak{Polt}_g\colon Q(\pi_1(\B \Sigma_g))\to Q(\G)$
the map induced by the Polterovich construction.
Recall that $\mathfrak{Polt}_g$ is injective modulo homomorphisms, see Section \ref{ssec-Polterovich-construction}.
Since the space $Q_{\mathcal{MCG}_g^1}(\pi_1(\B\Sigma_g), S_g)$ contains no non-trivial homomorphisms, it follows that the restricted map
$$
\mathfrak{Polt}_g\colon Q_{\mathcal{MCG}_g^1}(\pi_1(\B\Sigma_g), S_g)\hookrightarrow Q(\G)
$$
is injective. Recall that $Q(\G,\A)$ denotes the space of
quasimorphisms on the group $\G=\Ham(\B \Sigma_g)$
that vanish on the set $\A$ of autonomous diffeomorphisms.
Since by Corollary \ref{cor:inf-dim-qm-restriction}
$$
\dim(Q_{\mathcal{MCG}_g^1}(\pi_1(\B\Sigma_g), S_g))=\infty,
$$
and by \cite[Theorem 1]{BrandenburskyLpMetrics} every quasimorphism
which lies in the image of the map $\mathfrak{Polt}_g$ is Lipschitz
with respect to the $L^p$-metric, finishing the proof of
the theorem reduces to proving the following.

\begin{prop}\label{P:key-proposition}
The image of the map
$$
\mathfrak{Polt}_g\colon Q_{\mathcal{MCG}_g^1}(\pi_1(\B\Sigma_g), S_g)\hookrightarrow Q(\G)
$$
lies in the linear space $Q(\G,\A)$.
\end{prop}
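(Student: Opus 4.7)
The plan is to combine the structural decomposition of loops traced by Morse autonomous flows from Section \ref{sec-curves-aut-flows} with the mapping class group conjugation result of Proposition \ref{P:finite-conj-classes}, in the spirit of Proposition \ref{P:GG-autonomous-injective}. Let $\overline{\phi} \in Q_{\mathcal{MCG}_g^1}(\pi_1(\B\Sigma_g), S_g)$ with extension $\widehat{\phi}$ to $\mathcal{MCG}_g^1$, and set $\overline{\Phi} = \mathfrak{Polt}_g(\overline{\phi})$; the goal is $\overline{\Phi}(h) = 0$ for every $h \in \A$.

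First, by $C^1$-density of Morse functions in $C^\infty(\B\Sigma_g)$ together with Theorem \ref{T:Morse-Ham-hyperbolic}, I would reduce to the case of a Morse autonomous diffeomorphism $h = h_1$ generated by a Morse $H\colon\B\Sigma_g\to\B R$. Autonomy gives $h^k = h_k$, so the loop $[h^k_x]$ in $\pi_1(\B\Sigma_g,z)$ can be computed from the flow $\{h_t\}_{t\in[0,k]}$. For $x$ in the full-measure set $\mathrm{Reg}_H$, equation \eqref{eq:braid-morse-aut} gives
\[ [h^k_x] = \a'_{h,k,x} \circ [\g_x]^{m_{h,k,x}} \circ \a''_{h,k,x}, \]
with the word lengths of $\a'_{h,k,x}$ and $\a''_{h,k,x}$ bounded by a constant $C_{h,x}$ independent of $k$.

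The crucial step is to show that $\overline{\phi}([\g_x]) = 0$. By Proposition \ref{P:finite-conj-classes}, $[\g_x]$ is conjugate in $\mathcal{MCG}_g^1$ to some $s \in S_g \subset \pi_1(\B\Sigma_g)$. Since homogeneous quasimorphisms are automatically conjugation invariant, $\widehat{\phi}([\g_x]) = \widehat{\phi}(s) = \overline{\phi}(s) = 0$, and hence $\overline{\phi}([\g_x]) = 0$. Combining this with the homogeneity identity $\overline{\phi}([\g_x]^{m_{h,k,x}}) = 0$ and the quasimorphism inequality, I would obtain
\[ |\overline{\phi}([h^k_x])| \leq 2 C_{h,x} M_{\overline{\phi}} + 2 D_{\overline{\phi}}, \]
where $M_{\overline{\phi}}$ bounds $|\overline{\phi}|$ on a finite generating set of $\pi_1(\B\Sigma_g)$. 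This bound is independent of $k$, so $\overline{\phi}([h^k_x])/k \to 0$ pointwise on $\mathrm{Reg}_H$, and simultaneously provides a $k$-uniform dominating function.

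Dominated convergence then yields
\[ \overline{\Phi}(h) = \lim_{k\to\infty} \frac{1}{k}\int_{\B\Sigma_g} \overline{\phi}([h^k_x])\, \o = \int_{\B\Sigma_g} \lim_{k\to\infty} \frac{\overline{\phi}([h^k_x])}{k}\, \o = 0. \]
The main obstacle is the conjugation step: Proposition \ref{P:finite-conj-classes} only provides $\mathcal{MCG}_g^1$-conjugacy (not $\pi_1$-conjugacy) of $[\g_x]$ to $S_g$, which is precisely why we must restrict to quasimorphisms extending to the mapping class group; without such an extension there is no mechanism to transfer vanishing on $S_g$ to vanishing on $[\g_x]$. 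A secondary technical point is integrability of $C_{h,x}$ on $\B\Sigma_g$, which should follow from the fact that outside the finite critical set of $H$ the geometry of the level curves and short geodesics controls $C_{h,x}$ uniformly on compact subsets of $\mathrm{Reg}_H$.
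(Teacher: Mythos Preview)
Your proposal is correct and follows essentially the same route as the paper: reduce to Morse autonomous diffeomorphisms via Theorem~\ref{T:Morse-Ham-hyperbolic}, apply the decomposition \eqref{eq:braid-morse-aut}, use Proposition~\ref{P:finite-conj-classes} together with conjugation-invariance of the extension $\widehat{\phi}$ to get $\overline{\phi}([\gamma_x])=0$, and conclude that $\overline{\phi}([h^k_x])/k\to 0$ on the full-measure set $\mathrm{Reg}_H$. The paper is in fact slightly less explicit than you are about the limit--integral interchange and about bounding $|\overline{\phi}|$ on words of bounded length (it asserts $|\overline{\phi}(\alpha)|\leq D_{\overline{\phi}}\, l(\alpha)$ directly, which amounts to your observation with $M_{\overline{\phi}}=0$ since the standard generators of $\pi_1(\B\Sigma_g)$ are themselves simple loops and hence killed by $\overline{\phi}$ via the same $S_g$-argument).
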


\begin{proof}
Let $\overline{\phi}\in Q_{\mathcal{MCG}_g^1}(\pi_1(\B\Sigma_g), S_g)$ and $h\in\G$
an autonomous diffeomorphism. We need to show that $\overline{\Phi}(h)=0$,
where $\overline{\Phi}=\mathfrak{Polt}_g(\overline{\phi})$. Since Morse
functions on $\B\Sigma_g$ form a dense subset in the set of all smooth functions
in $C^1$-topology \cite{Milnor}, by Theorem \ref{T:Morse-Ham-hyperbolic}
it is enough to show that $\overline{\Phi}(h)=0$, where $h$ is the time-one map
of the flow generated by some Morse function $H\colon\B\Sigma_g\to\B R$.
Recall that we have
$$
\overline{\Phi}(h)=\int\limits_{\B\Sigma_g} \lim_{k\to\infty}\frac{\overline{\phi}([h^k_x])}{k}\thinspace\o\thinspace.
$$
Since the set $\mathrm{Reg}_H$ is of full measure in $\B\Sigma_g$,
it is enough to show that for each $x\in \mathrm{Reg}_H$ the following equality holds
$$
\lim_{k\to\infty}\frac{|\phi([h^k_x])|}{k}=0.
$$

The group $\pi_1(\B\Sigma_g)$ admits the following presentation
\begin{equation}\label{eq:fund-gp-presentation}
\pi_1(\B\Sigma_g)=\langle\a_i,\b_i|\hspace{2mm} 1\leq i\leq g,\thinspace \prod_{i=1}^g [\a_i,\b_i]=1\rangle .
\end{equation}
For every $\a\in\pi_1(\B\Sigma_g)$ denote by $l(\a)$ the word length of $\a$ with
respect to the set of generators given in \eqref{eq:fund-gp-presentation}.
Since $\overline{\phi}\in Q_{\mathcal{MCG}_g^1}(\pi_1(\B\Sigma_g), S_g)$, for every $\a\in\pi_1(\B\Sigma_g)$ we have $|\overline{\phi}(\a)|\leq D_{\overline{\phi}}\thinspace l(\a)$.
It follows from \eqref{eq:braid-morse-aut} that for every $k\in\B N$ and $x\in\mathrm{Reg}_H$ we have
$$
[h^k_x]=\a'_{h,k,x}\circ[\gamma_{x}]^{m_{h,k,x}}\circ\a''_{h,k,x}\thinspace,
$$
where $m_{h,k,x}$ is an integer which depends only $h$, $k$ and $x$,
and $l(\a'_{h,k,x})$, $l(\a''_{h,k,x})$ are bounded by some
constant $C_{h,x}>0$  independent of $k$.

Hence for every $k\in\B N$ and $x\in\mathrm{Reg}_H$ we have
$$
0\leq\frac{|\overline{\phi}([h^k_x])|}{k}\leq\frac{|\overline{\phi}(\a'_{h,k,x})|+
|m_{h,k,x}||\overline{\phi}([\gamma_{x}])|+|\overline{\phi}(\a''_{h,k,x})|+2D_{\overline{\phi}}}{k}\hspace{2mm}.
$$
Since $\overline{\phi}\in Q_{\mathcal{MCG}_g^1}(\pi_1(\B\Sigma_g), S_g)$,
by definition $\overline{\phi}$ extends to a homogeneous quasi-morphism on $\mathcal{MCG}_g^1$
and vanishes on the set $S_g$. Hence by Proposition \ref{P:finite-conj-classes}
we have $\overline{\phi}([\gamma_{x}])=0$, and hence
$$
0\leq\frac{|\overline{\phi}([h^k_x])|}{k}\leq\frac{2C_{h,x}\cdot D_{\overline{\phi}}+2D_{\overline{\phi}}}{k}=
\frac{2D_{\overline{\phi}}(C_{h,x}+1)}{k}\hspace{2mm}.
$$
By taking $k\to\infty$ we conclude the proof of the proposition.
\end{proof}

\bibliographystyle{amsplain}
\bibliography{AutonomousMetricsSurfacesRefs}

\end{document}